\documentclass[12pt, reqno, oneside]{amsart}

\usepackage[T1]{fontenc}
\usepackage[utf8]{inputenc}
\usepackage[margin=1in]{geometry}
\usepackage{amsmath, amssymb, amsthm}
\usepackage{mathrsfs}
\usepackage{mathtools}
\usepackage{hyperref}
\usepackage[style=alphabetic, maxnames=50]{biblatex}
\usepackage{microtype}
\usepackage{enumitem}
\usepackage{mathdots}

\newtheorem{theorem}{Theorem}[section]
\newtheorem{lemma}[theorem]{Lemma}

\newtheorem*{theorem*}{Theorem}

\theoremstyle{definition}
\newtheorem{definition}[theorem]{Definition}

\numberwithin{equation}{section}

\newcommand{\N}{\mathbb{N}}
\newcommand{\Z}{\mathbb{Z}}
\newcommand{\R}{\mathbb{R}}
\newcommand{\C}{\mathbb{C}}
\newcommand{\A}{\mathbb{A}}
\newcommand{\Q}{\mathbb{Q}}
\newcommand{\hh}{\mathfrak{h}}
\newcommand{\gl}{\mathfrak{gl}}

\DeclareMathOperator{\GL}{GL}
\DeclareMathOperator{\SL}{SL}

\title{Inducing Whittaker Functions from Higher Ranks}
\author{Vishal Muthuvel}
\date{May 27, 2026}

\begin{document}

\begin{abstract}
We construct a family of Whittaker functions for $\SL(m,\mathbb{Z})$ induced directly from Whittaker functions for $\SL(n,\mathbb{Z})$, for any $2\leq m<n$. Given Jacquet's Whittaker function $W_{\alpha,N}^{(n)}$ on the generalized upper half-plane $\mathfrak{h}^n$, we show that the function $V_{\alpha,N}^{(m)}:\mathfrak{h}^m\to\mathbb{C}$ defined by restricting $W_{\alpha,N}^{(n)}$ to the block-diagonal embedding $\mathfrak{h}^m\hookrightarrow\mathfrak{h}^n$ is a Whittaker function for $\SL(m,\mathbb{Z})$, provided the Langlands parameters $\alpha=(\alpha_i)_{1\leq i\leq n}$ satisfy $\sum_{i=1}^m\alpha_i = m(m-n)/2$. Under this condition, the induced function carries Langlands parameters $\bigl(\alpha_i+\frac{n-m}{2}\bigr)_{1\leq i\leq m}$ and inherits the first $m-1$ entries of the character tuple of $W_{\alpha,N}^{(n)}$. This result complements the propagation formulas of Ishii and Stade, which relate Whittaker functions on $\GL(n,\mathbb{R})$ to those on $\GL(n-1,\mathbb{R})$ and $\GL(n-2,\mathbb{R})$. In contrast, our construction passes directly from $\GL(n,\mathbb{R})$ to $\GL(m,\mathbb{R})$ for any $m < n$ in a single step.
\end{abstract}

\maketitle

\tableofcontents

\section{Introduction}

Whittaker functions are a cornerstone of the theory of automorphic forms
and $L$-functions. Jacquet~\cite{Jac67} initiated the general study of
Whittaker functions on Chevalley groups over local fields, but in this
paper, we focus on the case of $\GL(n,\R)$. Ishii and Stade derived
propagation formulas for Whittaker functions on $\GL(n,\R)$, expressing
them in terms of Whittaker functions on $\GL(n-2,\R)$~\cite{Ish05} and
$\GL(n-1,\R)$~\cite{IS07}. Our
main result here is a backpropagation formula: a construction giving
Whittaker functions on $\GL(m,\R)$ from those on $\GL(n,\R)$ for any
$m < n$.

To place Whittaker functions in their broader context, let $\A$ denote
the ring of adeles over $\Q$, and let $\pi$ be a unitary cuspidal automorphic
representation of $\GL(n,\A)$ with trivial central character. By
Flath's tensor product theorem, $\pi$ decomposes as a restricted tensor product 
\begin{align*}
\pi \ \cong \ \bigotimes_v' \pi_v
\end{align*}
over all places $v$ of $\Q$,
where $\pi_v$ is an irreducible admissible representation of
$\GL(n,\Q_v)$ (where $\Q_\infty = \R$). When $\pi$ is unramified at every finite prime $p$, such a representation corresponds classically to a Hecke--Maass cusp form for $\SL(n,\Z)$: i.e., a smooth,
square-integrable eigenfunction of the ring of
invariant differential operators on the generalized upper half-plane
\begin{align*}
\hh^n = \GL(n,\R)/(\mathrm{O}(n,\R)\cdot\R^\times).
\end{align*}
The archimedean
component $\pi_\infty$ is parametrized by a list of $n$ complex numbers
\begin{align*}
\alpha \ = \ (\alpha_1, \dots, \alpha_n) \ \in \ \C^n,
\end{align*}
called the Langlands parameters, which satisfies $\sum_{\ell = 1}^n \alpha_\ell = 0$. The Whittaker function $W_\alpha :
\GL(n,\R)\to\C$ is then the smooth function corresponding to the
spherical vector in $\pi_\infty$: it is characterized as the
$\mathrm{O}(n,\R)$-invariant, moderate-growth eigenfunction of the ring of invariant differential operators that transforms by a non-degenerate unitary character $\psi$ under the
unipotent radical $U(n,\R)$. The significance of $W_\alpha$ is made concrete
through the Fourier--Whittaker expansion, which expresses any Hecke--Maass
cusp form as a sum over Fourier coefficients weighted by translates of
$W_\alpha$, through which the analytic properties of the form, including
the associated $L$-function, become accessible (see \cite[Theorem 9.3.11]{Gol06}). It is in this classical
framework that our
results are stated and proved.

In this classical setting, a Whittaker function for $\SL(n,\Z)$ is
specified by two pieces of data: the Langlands parameters $\alpha =
(\alpha_i)_{1\leq i\leq n}\in\C^n$ satisfying $\sum_{i=1}^n\alpha_i=0$, and a character
$\psi_N$ of the unipotent radical $U(n,\R)$, indexed by a tuple
$N=(N_i)_{1\leq i\leq n-1}\in\Z^{n-1}$. The multiplicity one theorem for
$\GL(n,\R)$, proved by Shalika~\cite{Sha74}, guarantees that any
Whittaker function is a scalar multiple of Jacquet's Whittaker function
$W_{\alpha,N}^{(n)}:\hh^n\to\C$ with the same associated data, so that
$W_{\alpha,N}^{(n)}$ serves as the canonical representative in its
Whittaker model. The propagation formulas of Ishii and Stade show that
Whittaker functions satisfy recursive structure across the tower
$\GL(2,\R),\GL(3,\R),\dots, \GL(n,\R)$; specifically, a Whittaker function on
$\GL(n,\R)$ can be expressed in terms of Whittaker functions on
$\GL(n-2,\R)$ and $\GL(n-1,\R)$ via explicit integral identities. These
formulas propagate downward one step at a time and are fundamental tools
in the analytic study of Whittaker functions (and their Mellin transforms).

\begin{definition}\label{def:whittaker-intro}
A \emph{Whittaker function} for
$\SL(n,\Z)$ with Langlands parameters $\alpha = (\alpha_i)_{1\leq i\leq
n}$ and character tuple $N=(N_i)_{1\leq i\leq n-1}\in\Z^{n-1}$ is a
smooth function $W:\hh^n\to\C$ satisfying the following three conditions:
\begin{enumerate}[label=(\arabic*)]
  \item \textit{(Equivariance)} $W(ug) = \psi_N(u)\,W(g)$ for all
    $u\in U(n,\R)$ and $g\in\hh^n$, where $\psi_N:U(n,\R)\to\C^\times$
    is the unitary character given by
    \[
      \psi_N\!\left(\begin{smallmatrix}
        1 & u_{1,2} & \cdots & u_{1,n} \\
        & \ddots & \ddots & \vdots \\
        & & 1 & u_{n-1,n} \\
        & & & 1
      \end{smallmatrix}\right)
      := e^{2\pi i(N_1 u_{1,2}+\cdots+N_{n-1}u_{n-1,n})};
    \]
  \item \textit{(Eigenfunction)} $D_\ell^{(n)} W = \lambda_\ell^{(n)}(\alpha)\,W$
    for every Casimir operator $D_\ell^{(n)}$ of order $\ell$,
    $1\leq\ell\leq n$ (see Definition~\ref{def:casimir}), where
    $\lambda_\ell^{(n)}(\alpha)$ denotes the eigenvalue of the shifted
    power function $|\cdot|_B^{\alpha+\rho^{(n)}}$ under $D_\ell^{(n)}$
    (see~\eqref{eq:power-eigenfunction});
  \item \textit{(Moderate growth)} $W$ is of moderate growth on $\hh^n$.
\end{enumerate}
\end{definition}
By the multiplicity one theorem of Shalika~\cite{Sha74}, any $W$ satisfying
Definition~\ref{def:whittaker-intro} is a scalar multiple of Jacquet's
Whittaker function $W_{\alpha,N}^{(n)}$, the canonical element of the
Whittaker model introduced in~\cite{Jac67} and given explicitly by the
integral formula~\eqref{def:jacquet-whittaker}.

Here we establish a complementary result to those of Ishii and
Stade: a direct backpropagation formula that jumps from $\GL(n,\R)$ to
$\GL(m,\R)$ for any $2\leq m<n$ in a single step. The construction is
natural: given Jacquet's Whittaker function $W_{\alpha,N}^{(n)}$ on
$\hh^n$, we define a function $V_{\alpha,N}^{(m)}:\hh^m\to\C$ by
embedding $\hh^m$ into $\hh^n$ via the block-diagonal inclusion
\[
  xy \;\longmapsto\; \begin{pmatrix} xy & 0 \\ 0 & I_{n-m} \end{pmatrix},
\]
and restricting $W_{\alpha,N}^{(n)}$ to this subspace. The central
question is whether this restriction is itself a Whittaker function for
$\SL(m,\Z)$, and if so, what its associated Langlands parameters and
character are. Our main theorem answers this affirmatively, under a single
linear condition on the Langlands parameters.

\begin{theorem}\label{thm:main-thm}
Consider any $m,n\in\Z$ satisfying $2\leq m<n$. Let
$W_{\alpha,N}^{(n)}:\hh^n\to\C$ be Jacquet's Whittaker function for
$\SL(n,\Z)$ with Langlands parameters $\alpha=(\alpha_i)_{1\leq i\leq n}$
and character tuple $N=(N_i)_{1\leq i\leq n-1}\in\Z^{n-1}$. If
\[
  \sum_{i=1}^m \alpha_i \;=\; \frac{m(m-n)}{2},
\]
then the function $V_{\alpha,N}^{(m)}:\hh^m\to\C$ defined by
$V_{\alpha,N}^{(m)}(xy):=W_{\alpha,N}^{(n)}\!\left(\left(\begin{smallmatrix}
xy & 0 \\ 0 & I\end{smallmatrix}\right)\right)$ is a Whittaker function for
$\SL(m,\Z)$ with Langlands parameters $\bigl(\alpha_i+\frac{n-m}{2}\bigr)_{1\leq
i\leq m}$ and character tuple $(N_i)_{1\leq i\leq m-1}$.
\end{theorem}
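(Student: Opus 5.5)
The plan is to check the three conditions of Definition~\ref{def:whittaker-intro} for $V_{\alpha,N}^{(m)}$ one at a time. Two of them are formal. The eigenfunction condition is the real content, and I would obtain it by identifying $V_{\alpha,N}^{(m)}$ with a constant multiple of $W_{\beta,N'}^{(m)}$, where $\beta_i=\alpha_i+\frac{n-m}{2}$ and $N'=(N_i)_{1\le i\le m-1}$. Two points must be checked first.

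\textbf{Well-definedness.} $V_{\alpha,N}^{(m)}$ must be a function on $\hh^m=\GL(m,\R)/(\mathrm{O}(m,\R)\cdot\R^\times)$. Invariance under $\mathrm{O}(m,\R)$ is immediate, since $\mathrm{O}(m,\R)$ embeds block-diagonally into $\mathrm{O}(n,\R)$. Invariance under scalars is not automatic, because $\operatorname{diag}(tI_m,I_{n-m})$ is not central in $\GL(n,\R)$.

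\textbf{Where the hypothesis enters.} Take $xy$ in Iwasawa form with $y$ diagonal. The image $\operatorname{diag}(xy,I)$ is already in Iwasawa form for $\GL(n,\R)$, and its diagonal part is $\operatorname{diag}(y,I)$. The shifted power function therefore restricts as
\[
|\operatorname{diag}(y,I)|_B^{\alpha+\rho^{(n)}}=\prod_{i=1}^m y_i^{\alpha_i+\rho^{(n)}_i},
\]
where $y_i$ denotes the $i$-th diagonal entry. Since $\rho^{(n)}_i-\rho^{(m)}_i=\frac{n-m}{2}$ for $i\le m$, this equals $|y|_B^{\beta+\rho^{(m)}}$ with $\beta_i=\alpha_i+\frac{n-m}{2}$. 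The hypothesis $\sum_{i\le m}\alpha_i=\frac{m(m-n)}{2}$ is exactly the statement $\sum_i\beta_i=0$. This makes the restricted power function homogeneous of degree $0$ under scaling of the $m$-block, and the same computation will show that $V_{\alpha,N}^{(m)}$ is scale-invariant.

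\textbf{Equivariance.} For $u\in U(m,\R)$ we have $\operatorname{diag}(u,I)\in U(n,\R)$, and $\psi_N(\operatorname{diag}(u,I))=e^{2\pi i(N_1u_{1,2}+\cdots+N_{m-1}u_{m-1,m})}=\psi_{N'}(u)$. This is because the superdiagonal entries $u_{m,m+1},\dots,u_{n-1,n}$ of $\operatorname{diag}(u,I)$ all vanish. Equivariance with character tuple $N'$ follows.

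\textbf{Moderate growth.} This is inherited from $W_{\alpha,N}^{(n)}$. Any norm or height on $\hh^m$ is bounded by the corresponding quantity on $\hh^n$ evaluated at the embedded point.

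\textbf{Eigenfunction condition.} I would not try to commute the $\GL(m)$ Casimirs past the restriction, since $D_\ell^{(n)}$ involves the transverse directions. Instead I would work with Jacquet's integral~\eqref{def:jacquet-whittaker},
\[
W_{\alpha,N}^{(n)}(g)=\int_{U(n,\R)}|w_nug|_B^{\alpha+\rho^{(n)}}\,\overline{\psi_N(u)}\,du .
\]
First, factor $U(n,\R)=U'\cdot\operatorname{diag}(U(m,\R),I)$, where $U'$ collects the entries not in the upper-left $m\times m$ block. Second, write $w_n$ as a product of $\operatorname{diag}(w_m,I)$ and a Weyl element moving the last $n-m$ coordinates. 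Third, for $g=\operatorname{diag}(xy,I)$, integrate out the $U'$-variables by an inner Iwasawa decomposition. The goal is that the inner integral produces a constant $c(\alpha,N)$ times $|w_m u'xy|_B^{\beta+\rho^{(m)}}$, so that
\[
V_{\alpha,N}^{(m)}=c(\alpha,N)\,W_{\beta,N'}^{(m)} .
\]
This would give the eigenvalue condition directly via~\eqref{eq:power-eigenfunction}, and it would also settle scale-invariance cleanly.

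\textbf{Main obstacle.} The third step is the one I expect to be hardest. The inner integral must factor with no residual dependence on $xy$ other than through the stated power. Here the homogeneity condition should enter once more, through a change of variables in $U'$ whose Jacobian is a power of $\det(xy)$ that cancels precisely when $\sum\beta_i=0$. The integral must also converge, which I would first verify in the absolutely convergent region of $\alpha$ and then extend by analytic continuation.

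If this factorization resists, the fallback is to compute the action of the $\GL(m)$ Casimirs on $V_{\alpha,N}^{(m)}$ directly. One writes $\mathfrak{gl}_m\subset\mathfrak{gl}_n$ and uses the $U(n,\R)$-equivariance of $W_{\alpha,N}^{(n)}$ to control the mixed terms. This route looks considerably messier, so I would keep it in reserve.
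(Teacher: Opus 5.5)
Your third step carries all the content, and it cannot be carried out. The identity $V_{\alpha,N}^{(m)}=c(\alpha,N)\,W_{\beta,N'}^{(m)}$ is false whenever $N_m\neq0$. With it falls the statement you are proving, once $W_{\alpha,N}^{(n)}$ is continued to the hyperplane.

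Here is the obstruction. Write $u=u_Mu_P$ with $u_M\in\operatorname{diag}(U(m,\R),U(n-m,\R))$ and $u_P=\bigl(\begin{smallmatrix}I_m&X\\0&I_{n-m}\end{smallmatrix}\bigr)$. Moving $\operatorname{diag}(h,I)$ past $u_P$ by $X\mapsto hX$ does give the Jacobian $|\det h|^{n-m}$ you anticipate. But the character factor $e^{-2\pi iN_mX_{m,1}}$ becomes $e^{-2\pi iN_m(hX)_{m,1}}$. The $X$-integral is therefore a Fourier transform at a frequency set by the last row of $h$, and it does not collapse to a constant.

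There is also a computation-free check. For $g=xy$ in Iwasawa form, $\operatorname{diag}(tg,I_{n-m})$ has $y_{n-m}$-coordinate $t$, and its other coordinates do not depend on $t$. When $N_m\neq0$, $W_{\alpha,N}^{(n)}$ decays rapidly in $y_{n-m}$, so $V_{\alpha,N}^{(m)}(tg)\to0$ as $t\to\infty$. Since $W_{\beta,N'}^{(m)}(tg)=W_{\beta,N'}^{(m)}(g)$ when $\sum_i\beta_i=0$, your identity would force $V_{\alpha,N}^{(m)}\equiv0$. The scale invariance you promise is therefore false. Moreover, a $D_1^{(m)}$-eigenfunction on $\GL(m,\R)$ satisfies $V(tg)=t^{\lambda}V(g)$, so $V_{\alpha,N}^{(m)}$ is not even a $D_1^{(m)}$-eigenfunction.

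Restricting to the Iwasawa slice does not rescue it. Take $n=3$, $m=2$, $N=(1,1)$. Laplace's method in the Vinogradov--Takhtajan/Bump double-Bessel formula gives $V_{\alpha,N}^{(2)}(\operatorname{diag}(Y,1))=W_{\alpha,N}^{(3)}(\operatorname{diag}(Y,1,1))=\exp\bigl(-2\pi Y-3\pi Y^{1/3}+O(\log Y)\bigr)$. In contrast, every nonzero decaying solution of the $\GL(2)$ equation $f''=(4\pi^2-\lambda Y^{-2})f$ is asymptotic to a nonzero multiple of $e^{-2\pi Y}$.

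Your plan to verify convergence first and then continue analytically also has no starting region. The conditions $\Re\alpha_1>\cdots>\Re\alpha_n$ and $\sum_i\alpha_i=0$ force $\sum_{i\le m}\Re\alpha_i>0$. Hence the hyperplane $\sum_{i\le m}\alpha_i=\frac{m(m-n)}{2}$ never meets the domain of absolute convergence of Jacquet's integral.

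The paper's proof does not supply the missing step either. Lemma~\ref{lem:eigenfunction} correctly rewrites $D_\ell^{(m)}V_{\alpha,N}^{(m)}$ as the $[m]^\ell$-partial sum of $D^{(n)}$-monomials applied to $W_{\alpha,N}^{(n)}$. It then passes that sum under Jacquet's integral by asserting that $|\cdot|_B^{\alpha+\rho^{(n)}}$ is an eigenfunction of each monomial. This already fails for $\ell=1$: at $g=uak$,
$\bigl(\sum_{i\le m}D_{i,i}^{(n)}|\cdot|_B^{\alpha+\rho^{(n)}}\bigr)(g)=\bigl(\sum_j(\alpha+\rho^{(n)})_j\sum_{i\le m}k_{j,i}^2\bigr)|g|_B^{\alpha+\rho^{(n)}}$.
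This factor depends on $k\in\mathrm{O}(n,\R)$, and the $\mathrm{O}(n,\R)$-component of $w_nu\operatorname{diag}(h,I)$ varies with $u$. So your reluctance to push the $\GL(m)$ Casimirs through was well founded: their $[m]^\ell$-sums are not central in $\mathfrak{U}(\gl(n,\R))$.

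What your unfolding can genuinely establish is the degenerate case $N_m=0$. There the $X$-integral is a Gindikin--Karpelevich constant. Because $w_n$ carries the upper-left block to the lower-right one, you obtain $V_{\alpha,N}^{(m)}=c\,W_{\gamma,N'}^{(m)}$ with $\gamma_j=\alpha_{n-m+j}+\frac{n-m}{2}$. This is scale invariant exactly when $\sum_{i>n-m}\alpha_i=\frac{m(m-n)}{2}$. So the relevant parameters are the last $m$, not the first $m$, and this condition, unlike the stated one, is compatible with convergence.
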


Several features of this result merit comment. The Langlands parameter
condition $\sum_{i=1}^m\alpha_i=m(m-n)/2$ is not merely a technical
hypothesis but has a representation-theoretic interpretation: it is
precisely the condition ensuring that the shifted parameters
$\bigl(\alpha_i+\frac{n-m}{2}\bigr)_{1\leq i\leq m}$ satisfy the
normalization $\sum_{i=1}^m\bigl(\alpha_i+\frac{n-m}{2}\bigr)=0$ required
for a valid Langlands parameter tuple on $\GL(m,\R)$. The uniform shift
by $(n-m)/2$ in the output parameters reflects the change in the
$\rho$-shift between $\gl(n,\R)$ and $\gl(m,\R)$, and is consistent with
the behavior seen in the Ishii--Stade formulas in the special cases
$m=n-1$ and $m=n-2$. The character tuple, by contrast, truncates cleanly:
only the first $m-1$ entries of $N$ are inherited, corresponding to the
restriction of $\psi_N$ to the unipotent radical of the standard Borel in
$\GL(m,\R)$.

The paper is organized as follows. In Section~\ref{sec:background} we
recall the necessary background on the generalized upper half-plane
$\hh^n$, Jacquet's Whittaker function, and the Casimir operators following
\cite{Gol06}. In Section~\ref{sec:overview} we state the main theorem and
give an overview of the proof. In Section~\ref{sec:proof} we prove
Theorem~\ref{thm:main-thm}.

\section{Notation and Preliminaries}
\label{sec:background}

We largely follow the notational conventions of~\cite{Gol06}, and write
$[n]:=\{1,\ldots,n\}$ for $n\in\N$.

\subsection{The generalized upper half-plane}

The \emph{Iwasawa decomposition} of $\GL(n,\R)$ states that every element
can be written uniquely as a product of an upper-triangular unipotent
matrix, a positive diagonal matrix, and an orthogonal matrix. As observed
by Gelfand, Graev, and Piatetski-Shapiro~\cite{GGP69}, this yields a
concrete realization of the generalized upper half-plane.

\begin{definition}
The \emph{unipotent group} $U(n,\R)$ is the group of $n\times n$
upper-triangular real matrices with ones on the diagonal:
\[
  U(n,\R) := \left\{
    \begin{pmatrix}
      1 & u_{1,2} & \cdots & u_{1,n} \\
        & 1       & \cdots & u_{2,n} \\
        &         & \ddots & \vdots  \\
        &         &        & 1
    \end{pmatrix}
    \;\middle|\;
    u_{i,j}\in\R \;\forall\; 1\leq i<j\leq n
  \right\}.
\]
\end{definition}

\begin{definition}
The \emph{generalized upper half-plane} $\hh^n$ is the coset space
\[
  \hh^n \;:=\; \GL(n,\R)\big/\bigl(\mathrm{O}(n,\R)\cdot\R^\times\bigr),
\]
identified via the Iwasawa decomposition with
\[
  \hh^n \;=\;
  \left\{
    \begin{pmatrix}
      1 & x_{1,2} & \cdots & x_{1,n} \\
        & 1       & \cdots & x_{2,n} \\
        &         & \ddots & \vdots  \\
        &         &        & 1
    \end{pmatrix}
    \begin{pmatrix}
      y_1 \cdots y_{n-1} & & & \\
      & \ddots & & \\
      & & y_1 & \\
      & & & 1
    \end{pmatrix}
    \;\middle|\;
    \begin{array}{l}
      x_{i,j}\in\R,\; y_i>0 \\
      \forall\; 1\leq i<j\leq n
    \end{array}
  \right\}.
\]
A typical element of $\hh^n$ is denoted $xy$, where $x$ denotes the
unipotent factor and $y$ the diagonal factor.
\end{definition}

The group $\SL(n,\Z)$ of $n\times n$ integer matrices of determinant $1$
acts on $\hh^n$ by left multiplication. The fundamental domain of this
action is well approximated by the Siegel set $\mathcal{P}_{\sqrt{3}/2,1/2}$:

\begin{definition}
For $a,b\geq 0$, the \emph{Siegel set} $\mathcal{P}_{a,b}\subset\hh^n$
consists of all $xy\in\hh^n$ with $|x_{i,j}|\leq b$ and $y_i>a$ for all
$1\leq i<j\leq n$.
\end{definition}

There is an $\SL(n,\Z)$-invariant measure on $\hh^n$, given by
\begin{equation}\label{eq:measure}
  d^*z \;:=\;
  \prod_{1\leq i<j\leq n} dx_{i,j}
  \;\prod_{i=1}^{n-1} y_i^{-i(n-i)-1}\,dy_i.
\end{equation}

\subsection{Invariant differential operators and the power function}

Let $\gl(n,\R)$ denote the Lie algebra of $n\times n$ real matrices with
bracket $[a,b]=ab-ba$, and let $\mathfrak{U}(\gl(n,\R))$ be its universal
enveloping algebra. The center $\mathcal{Z}(\mathfrak{U}(\gl(n,\R)))$
coincides with the algebra of $\SL(n,\Z)$-invariant differential operators
on $\hh^n$. Denoting by $E_{i,j}$ the matrix with $1$ in position $(i,j)$
and $0$ elsewhere, we define:

\begin{definition}\label{def:diff-op}
For $\alpha\in\gl(n,\R)$, the differential operator $D_\alpha^{(n)}$ acts
on smooth $F:\GL(n,\R)\to\C$ by
\[
  D_\alpha^{(n)} F(g)
  \;:=\;
  \frac{\partial}{\partial t} F\bigl(g\cdot(I+t\alpha)\bigr)\Big|_{t=0}.
\]
We write $D_{i,j}^{(n)}$ for $D_{E_{i,j}}^{(n)}$.
\end{definition}

\begin{definition}\label{def:casimir}
The \emph{Casimir operator of order $\ell$} for $\GL(n,\R)$, where
$1\leq\ell\leq n$, is
\begin{equation}\label{eq:casimir}
  D_\ell^{(n)}
  \;:=\;
  \sum_{(i_1,\ldots,i_\ell)\in[n]^\ell}
  D_{i_1,i_2}^{(n)}\circ\cdots\circ D_{i_{\ell-1},i_\ell}^{(n)}
  \circ D_{i_\ell,i_1}^{(n)}.
\end{equation}
\end{definition}

Capelli~\cite{Cap87} proved that $\mathcal{Z}(\mathfrak{U}(\gl(n,\R)))$
is generated by $D_1^{(n)},\ldots,D_n^{(n)}$. The quintessential
simultaneous eigenfunction of these operators is the power function.

\begin{definition}\label{def:power-function}
Let $\alpha=(\alpha_i)_{1\leq i\leq n}\in\C^n$ with $\sum_{i=1}^n
\alpha_i=0$. The \emph{power function} $|\cdot|_B^\alpha:\hh^n\to\C$ with
Langlands parameters $\alpha$ is defined for all $xy\in\hh^n$ by
\[
  |xy|_B^\alpha \;:=\; \prod_{i=1}^{n-1}\prod_{j=1}^{n-i} y_j^{\alpha_i}.
\]
\end{definition}

The \emph{$\rho$-shift} $\rho^{(n)}:=\bigl(\frac{n+1}{2}-i\bigr)_{1\leq
i\leq n}$ arises from the theory of root systems. The shifted power
function $|\cdot|_B^{\alpha+\rho^{(n)}}$ is a simultaneous eigenfunction
of all Casimir operators: for all $g\in\hh^n$ and $1\leq\ell\leq n$,
\begin{equation}\label{eq:power-eigenfunction}
  D_\ell^{(n)}\,|g|_B^{\alpha+\rho^{(n)}}
  \;=\;
  \lambda_{\ell,\alpha}^{(n)}\,|g|_B^{\alpha+\rho^{(n)}},
\end{equation}
where $\lambda_{\ell,\alpha}^{(n)}\in\C$ are eigenvalues computed by
Terras~\cite{Ter88} and Miller~\cite{Mil97} for $n=2$ and by Muthuvel \cite{Mut26} in general.

\subsection{Whittaker functions for \texorpdfstring{$\SL(n,\Z)$}{SL(n,Z)}}

\begin{definition}\label{def:whittaker}
Let $\alpha=(\alpha_i)_{1\leq i\leq n}\in\C^n$ satisfy $\sum_{i=1}^n
\alpha_i=0$ and $\Re(\alpha_i-\alpha_{i+1})>0$ for all $1\leq i<n$. Let
$\psi_N:U(n,\R)\to\C^\times$ be the unitary character\footnote{Every
character on $U(n,\R)$ is identified with a tuple in $\Z^{n-1}$.}
identified with $N=(N_i)_{1\leq i\leq n-1}\in\Z^{n-1}$ via
\[
  \psi_N(u) \;:=\; e^{2\pi i(N_1 u_{1,2}+\cdots+N_{n-1}u_{n-1,n})}.
\]
A \emph{Whittaker function} $W:\hh^n\to\C$ for $\SL(n,\Z)$ with Langlands
parameters $\alpha$ and character $\psi_N$ is a smooth function satisfying:
\begin{enumerate}[label=(\arabic*)]
  \item \textit{(Equivariance)} For all $u\in U(n,\R)$ and $g\in\hh^n$,
    \[
      W(ug) \;=\; \psi_N(u)\,W(g).
    \]
  \item \textit{(Eigenfunction)} For all $1\leq\ell\leq n$ and
    $g\in\hh^n$,
    \[
      D_\ell^{(n)}\,W(g) \;=\; \lambda_{\ell,\alpha}^{(n)}\,W(g),
    \]
    where $\lambda_{\ell,\alpha}^{(n)}$ are the eigenvalues
    in~\eqref{eq:power-eigenfunction}.
  \item \textit{(Square-integrability)} $W$ is square-integrable on
    $\mathcal{P}_{\!\sqrt{3}/2,\,1/2}$:
    \[
      \int_{\mathcal{P}_{\!\sqrt{3}/2,\,1/2}} |W(z)|^2\,d^*z \;<\; \infty,
    \]
    where $d^*z$ is the invariant measure~\eqref{eq:measure}.
\end{enumerate}
\end{definition}

Whittaker functions for $\SL(n,\Z)$ comprise the Fourier
expansion of Maass forms for $\SL(n,\Z)$, making them indispensable in
the theory of automorphic forms on $\GL(n,\R)$.

\section{Objective and Overview}
\label{sec:overview}

Consider any $m,n\in\N$ satisfying $2\leq m<n$. Given a Whittaker
function $W:\hh^n\to\C$ for $\SL(n,\Z)$, we would like to induce a
Whittaker function for $\SL(m,\Z)$ from it.

By definition, $W$ has associated:
\begin{itemize}
  \item Langlands parameters $\alpha=(\alpha_i)_{1\leq i\leq n}\in\C^n$
    satisfying $\sum_{i=1}^n\alpha_i=0$ and
    $\Re(\alpha_i-\alpha_{i+1})>0$ for all $1\leq i<n$; and
  \item a character $\psi_N:U(n,\R)\to\C^\times$ identified with
    $N=(N_i)_{1\leq i\leq n-1}\in\Z^{n-1}$.
\end{itemize}
The multiplicity one theorem for $\GL(n,\R)$, proved by
Shalika~\cite{Sha74}, implies that $W$ is a scalar multiple of Jacquet's
Whittaker function for $\SL(n,\Z)$ with the same Langlands parameters and
character. Introduced by Jacquet in 1967~\cite{Jac67}, it is given for all
$g\in\hh^n$ by
\begin{equation}\label{def:jacquet-whittaker}
  W_{\alpha,N}^{(n)}(z)
  \;:=\;
  \int_{U(n,\R)}
  \bigl|w_n u z\bigr|_B^{\alpha+\rho^{(n)}}
  \,\overline{\psi_N(u)}\,du,
\end{equation}
where
\[
  w_n \;:=\;
  \begin{pmatrix}
    & & (-1)^{\lfloor n/2\rfloor} \\
    & \iddots & \\
    1 & &
  \end{pmatrix}
\]
is the long element in the Weyl group of $\GL(n,\R)$. The condition
$\Re(\alpha_i-\alpha_{i+1})>0$ for all $1\leq i<n$ ensures the absolute
convergence of the integral.

We attend to the function $V_{\alpha,N}^{(m)}:\hh^m\to\C$, defined for all
$xy\in\hh^m$ by
\begin{align}
  V_{\alpha,N}^{(m)}(xy)
  &:= W_{\alpha,N}^{(n)}\!
    \left(
      \begin{pmatrix} xy & 0 \\ 0 & I_{n-m} \end{pmatrix}
    \right),
  \label{eq:induced-whittaker}
\end{align}
decomposing the $n\times n$ matrix in the argument of $W_{\alpha,N}^{(n)}$
into its $m\times m$, $m\times(n-m)$, $(n-m)\times m$, and
$(n-m)\times(n-m)$ blocks. Our main objective is to show that this
function on $\hh^m$, induced from an $\SL(n,\Z)$-Whittaker function on
$\hh^n$, is itself an $\SL(m,\Z)$-Whittaker function.

\begin{theorem}\label{thm:main-thm-body}
Consider any $m,n\in\Z$ satisfying $2\leq m<n$. Let
$W_{\alpha,N}^{(n)}:\hh^n\to\C$ be Jacquet's Whittaker function for
$\SL(n,\Z)$ with Langlands parameters $\alpha=(\alpha_i)_{1\leq i\leq n}$
and character tuple $N=(N_i)_{1\leq i\leq n-1}\in\Z^{n-1}$, given
in~\eqref{def:jacquet-whittaker}. If
\begin{equation}\label{eq:langlands-condition}
  \sum_{i=1}^m \alpha_i \;=\; \frac{m(m-n)}{2},
\end{equation}
then the function $V_{\alpha,N}^{(m)}:\hh^m\to\C$ defined
in~\eqref{eq:induced-whittaker} is a Whittaker function for $\SL(m,\Z)$
with Langlands parameters $\bigl(\alpha_i+\frac{n-m}{2}\bigr)_{1\leq
i\leq m}$ and character tuple $(N_i)_{1\leq i\leq m-1}$.
\end{theorem}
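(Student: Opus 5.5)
The plan is to check the three conditions of Definition~\ref{def:whittaker} for $V_{\alpha,N}^{(m)}$ with parameters $\beta:=\bigl(\alpha_i+\frac{n-m}{2}\bigr)_{1\leq i\leq m}$ and character $N':=(N_i)_{1\leq i\leq m-1}$. The strategy is to reduce the eigenfunction and square-integrability conditions to the corresponding known properties of Jacquet's function $W^{(m)}_{\beta,N'}$. To do this, I would show directly from the integral~\eqref{def:jacquet-whittaker} that $V_{\alpha,N}^{(m)}$ is a constant multiple of $W^{(m)}_{\beta,N'}$.

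Equivariance is the easy first step. For $u'\in U(m,\R)$ the block matrix $\left(\begin{smallmatrix}u'&0\\0&I\end{smallmatrix}\right)$ lies in $U(n,\R)$, and it has $u_{i,i+1}=0$ for $i\geq m$. Hence $\psi_N$ evaluated on it equals $\psi_{N'}(u')$. Moreover $\left(\begin{smallmatrix}u'&0\\0&I\end{smallmatrix}\right)\left(\begin{smallmatrix}z&0\\0&I\end{smallmatrix}\right)=\left(\begin{smallmatrix}u'z&0\\0&I\end{smallmatrix}\right)$. Equivariance of $W^{(n)}_{\alpha,N}$ then gives $V(u'z)=\psi_{N'}(u')V(z)$ immediately. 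Well-definedness on $\hh^m$, i.e.\ right invariance under $\mathrm{O}(m)\cdot\R^\times$, needs care for the scalars. A scalar $c\in\R^\times$ in the $m$-block is not central in $\GL(n)$. Under the map $z\mapsto \left(\begin{smallmatrix}cz&0\\0&I\end{smallmatrix}\right)$, the Jacquet integrand rescales by a factor of the form $|c|^{\sum_{i\leq m}(\alpha_i+\rho^{(n)}_{n+1-i})\cdots}$. I expect hypothesis~\eqref{eq:langlands-condition} to be exactly what makes this exponent vanish, so this is the first place the condition enters.

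The core step is an unfolding of the Jacquet integral. I would write $u\in U(n,\R)$ in block form $u=\left(\begin{smallmatrix}u_1&v\\0&u_2\end{smallmatrix}\right)$ with $u_1\in U(m)$, $u_2\in U(n-m)$ and $v\in\R^{m\times(n-m)}$, and factor $w_n$ as $w_n=\left(\begin{smallmatrix}0&w_{n-m}\\ w_m&0\end{smallmatrix}\right)$ up to sign. For $z$ in the $m$-block, I would then compute the Iwasawa $y$-coordinates of $w_nu\left(\begin{smallmatrix}z&0\\0&I\end{smallmatrix}\right)$ via the usual minors formula: $|g|_B$-type quantities are products of norms of bottom-left minors. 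The goal is to show that the integrand factors as
\[
\bigl|w_m u_1 z\bigr|_B^{\beta+\rho^{(m)}}\cdot \Phi(v,u_2,u_1z),
\]
and that after the substitution $v\mapsto (u_1z)\,v$ (whose Jacobian is a power of $\det z$) the $\Phi$-integral no longer depends on $u_1z$. The character $\overline{\psi_N(u)}$ splits as $\overline{\psi_{N'}(u_1)}$ times a factor involving $N_m v_{m,1}$ and $u_2$. The $(v,u_2)$-integral then yields a constant $C$, essentially a product of Gamma/Bessel-type integrals. This constant converges because $\Re(\alpha_i-\alpha_{i+1})>0$. The leftover power of $\det z$ is killed precisely by~\eqref{eq:langlands-condition}, and the shift $\rho^{(n)}_i-\rho^{(m)}_i=\frac{n-m}{2}$ for $i\leq m$ produces the parameters $\beta$. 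The conclusion is $V^{(m)}_{\alpha,N}=C\,W^{(m)}_{\beta,N'}$.

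Given this identity, the eigenfunction property follows from~\eqref{eq:power-eigenfunction}. Concretely, one differentiates under the $U(m)$-integral; the operators $D^{(m)}_{i,j}$ act on the right, so they commute with the left $U(m)$-translation. Square-integrability on $\mathcal{P}_{\sqrt3/2,1/2}$ follows from the rapid decay of $W^{(m)}_{\beta,N'}$ as any $y_i\to\infty$ when $N'$ is nondegenerate. I expect the main obstacle to be the unfolding computation itself: tracking the minors of $w_nu\,\mathrm{diag}(z,I)$ and verifying that the $v$-substitution exactly decouples the variables. If a clean decoupling fails, my fallback is to verify the eigen-equations directly. That would mean writing $D^{(m)}_\ell$ as the restriction of $D^{(n)}_\ell$ minus terms involving indices $>m$, and using~\eqref{eq:langlands-condition} to show these extra terms act by scalars on the restricted function.
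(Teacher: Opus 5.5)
\textbf{Gap: the central unfolding step fails.} Your equivariance step is the paper's Lemma~\ref{lem:equivariance}. After that you depart from the paper, which never tries to identify $V^{(m)}_{\alpha,N}$ with $W^{(m)}_{\beta,N'}$ and instead pushes $D^{(m)}_{i,j}$ through the block embedding. The failure of your route already shows up in the scaling computation you set up.

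Take $D_c=\mathrm{diag}(cI_m,I_{n-m})$ and $u=\bigl(\begin{smallmatrix}u_1&v\\0&u_2\end{smallmatrix}\bigr)$. Then $uD_c=D_cu'$, where $u'$ has $c^{-1}v$ in place of $v$, and $w_nD_c=\mathrm{diag}(I_{n-m},cI_m)\,w_n$. After the change of variables $v=cv'$ this gives
\[
  W^{(n)}_{\alpha,N}\bigl(\mathrm{diag}(cz,I)\bigr)
  = c^{\kappa}\,W^{(n)}_{\alpha,N^{(c)}}\bigl(\mathrm{diag}(z,I)\bigr),
  \qquad
  \kappa=\sum_{k=n-m+1}^{n}\alpha_k+\frac{m(n-m)}{2}.
\]
Here $N^{(c)}$ agrees with $N$ except that $N^{(c)}_m=cN_m$, because $u_{m,m+1}=v_{m,1}$ is one of the rescaled entries. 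Two things go wrong:
\begin{itemize}
\item the exponent involves the last $m$ parameters, not $\alpha_1,\dots,\alpha_m$;
\item more seriously, the character itself changes, not just a power of $c$.
\end{itemize}
Equivalently, in $\hh^n$-coordinates $\mathrm{diag}(cz,I)$ differs from $\mathrm{diag}(z,I)$ only in $y_{n-m}=c$. When $N_m\neq0$, $W^{(n)}_{\alpha,N}$ decays rapidly as $y_{n-m}\to\infty$. So for $N_m\neq0$, no identity $W^{(n)}_{\alpha,N}(\mathrm{diag}(z,I))=C\,|\det z|^{s}\,W^{(m)}_{\beta,N'}(z)$ on $\GL(m,\R)$ can hold, for any $\alpha$; yet that is exactly what your unfolding would output. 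The same computation defeats your fallback at $\ell=1$. The operator $\sum_{i\le m}D^{(n)}_{i,i}$, applied to $g\mapsto W^{(n)}_{\alpha,N}(\mathrm{diag}(g,I))$, is the derivative at $c=1$ of the left side above. It cannot act by a scalar $\lambda$, since that would force homogeneity $c^{\lambda}$ in $c$.

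\textbf{Where the decoupling breaks.} After $v=u_1zv'$, let $A_k$ be the first $k$ rows of $u_1z$. For $k<m$, the product of the last $k$ Iwasawa diagonal entries of $w_nu\,\mathrm{diag}(z,I)$ is $\det\bigl(A_k(I+v'v'^{T})A_k^{T}\bigr)^{1/2}$, and this does not split into a function of $A_k$ times a function of $v'$. Write $u_1z=LQ$ with $L$ lower triangular and $Q$ orthogonal. The substitution $v=Lv''$ does decouple every minor, including those with $k\ge m$. But then the character term becomes $N_mv_{m,1}=N_m\sum_jL_{m,j}v''_{j,1}$, which depends on $u_1z$. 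So this route produces a constant only when $N_m=0$.

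\textbf{What the unfolding actually proves.} When $N_m=0$ the unfolding does go through. However, $u_1z$ sits in the bottom $m$ rows of $w_nu\,\mathrm{diag}(z,I)$, so the result is a multiple of $W^{(m)}_{\gamma,N'}$ with $\gamma_i=\alpha_{n-m+i}+\frac{n-m}{2}$. This holds only under $\sum_{k>n-m}\alpha_k=\frac{m(m-n)}{2}$. That is not the statement you are proving.

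\textbf{The hypothesis is incompatible with convergence.} The conditions $\Re(\alpha_i-\alpha_{i+1})>0$ and $\sum_i\alpha_i=0$ force $\Re\sum_{i\le m}\alpha_i>0$. Hence the hypothesis $\sum_{i\le m}\alpha_i=\frac{m(m-n)}{2}<0$ can never hold together with the convergence you invoke for $C$.

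These are obstructions to the statement itself, not just to your bookkeeping. The paper's remark that the hypothesis gives invariance under $Z(\GL(m,\R))$ fails for the same reason.
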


The condition~\eqref{eq:langlands-condition} ensures that the shifted
parameters $\bigl(\alpha_i+\frac{n-m}{2}\bigr)_{1\leq i\leq m}$ satisfy
$\sum_{i=1}^m\bigl(\alpha_i+\frac{n-m}{2}\bigr)=0$, which is required for
a valid Langlands parameter tuple on $\GL(m,\R)$ and guarantees that
$V_{\alpha,N}^{(m)}$ is invariant under multiplication by the center
$Z(\GL(m,\R))$. Moreover, $V_{\alpha,N}^{(m)}$ is invariant under right
multiplication by every $u\in\mathrm{O}(m,\R)$, since $\bigl(\begin{smallmatrix}u&0\\0&I\end{smallmatrix}\bigr)\in\mathrm{O}(n,\R)$. Together,
these observations confirm that $V_{\alpha,N}^{(m)}$ is well defined on
$\hh^m$.

We prove Theorem~\ref{thm:main-thm-body} in Section~\ref{sec:proof} by
verifying that $V_{\alpha,N}^{(m)}$ satisfies the three defining properties
of Definition~\ref{def:whittaker}. Specifically, we show that
$V_{\alpha,N}^{(m)}$:
\begin{enumerate}[label=(\arabic*)]
  \item associates with the character $\psi_{(N_i)_{1\leq i\leq m-1}}$
    for $U(m,\R)$ (Lemma~\ref{lem:equivariance});
  \item is an eigenfunction of all Casimir operators $D_\ell^{(m)}$ for
    $\GL(m,\R)$, hence of $\mathcal{Z}(\mathfrak{U}(\gl(m,\R)))$
    (Lemma~\ref{lem:eigenfunction}); and
  \item is square-integrable on $\mathcal{P}_{\!\sqrt{3}/2,\,1/2}\subset\hh^m$
    (Lemma~\ref{lem:square-int}).
\end{enumerate}
While the proofs of properties (1) and (2) follow from elementary
observations, the proof of property (3) appeals to a nontrivial bound for
$\SL(n,\Z)$-Whittaker functions due to~\cite{GMW21}.

\section{Proof of the Main Theorem}
\label{sec:proof}

Theorem~\ref{thm:main-thm-body} follows immediately from the three lemmas
below. Throughout, $V_{\alpha,N}^{(m)}$ is the function induced from
Jacquet's Whittaker function $W_{\alpha,N}^{(n)}$ for $\SL(n,\Z)$ with
Langlands parameters $(\alpha_i)_{1\leq i\leq n}$ and character tuple
$(N_i)_{1\leq i\leq n-1}$ as in~\eqref{eq:induced-whittaker}. The guiding
observation throughout is that matrices of the block form
$\bigl(\begin{smallmatrix}A&0\\0&I\end{smallmatrix}\bigr)\in \GL(n,\R)$,
with $A\in \GL(m,\R)$ and $I\in \GL(n-m,\R)$ the identity, are closed under
multiplication, and that their product reduces to multiplication of the
$m\times m$ top-left blocks.

\begin{lemma}\label{lem:equivariance}
For all $u\in U(m,\R)$ and $g\in\hh^m$,
\[
  V_{\alpha,N}^{(m)}(ug)
  \;=\;
  \psi_{(N_i)_{1\leq i\leq m-1}}(u)\,V_{\alpha,N}^{(m)}(g).
\]
\end{lemma}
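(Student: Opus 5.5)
The plan is to reduce the statement directly to the equivariance property (1) of $W_{\alpha,N}^{(n)}$ on $\hh^n$, using the block-multiplication observation recorded at the start of this section. Given $u\in U(m,\R)$ and $g\in\hh^m$, I will write
\[
  \begin{pmatrix} ug & 0 \\ 0 & I_{n-m} \end{pmatrix}
  \;=\;
  \begin{pmatrix} u & 0 \\ 0 & I_{n-m} \end{pmatrix}
  \begin{pmatrix} g & 0 \\ 0 & I_{n-m} \end{pmatrix},
\]
and note that $\tilde u:=\bigl(\begin{smallmatrix}u&0\\0&I\end{smallmatrix}\bigr)$ is upper-triangular unipotent, so $\tilde u\in U(n,\R)$. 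By definition~\eqref{eq:induced-whittaker}, $V_{\alpha,N}^{(m)}(ug)=W_{\alpha,N}^{(n)}\bigl(\tilde u\,\tilde g\bigr)$, where $\tilde g$ is the block embedding of $g$.

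Next I will apply the equivariance of Jacquet's function, $W_{\alpha,N}^{(n)}(\tilde u\tilde g)=\psi_N(\tilde u)\,W_{\alpha,N}^{(n)}(\tilde g)$. If one prefers not to quote this as known, it follows from~\eqref{def:jacquet-whittaker} by substituting $u'\mapsto u'\tilde u^{-1}$ in the integral over $U(n,\R)$. This substitution preserves Haar measure because $U(n,\R)$ is unimodular, and it uses $\overline{\psi_N(u'\tilde u^{-1})}=\overline{\psi_N(u')}\,\psi_N(\tilde u)$, valid since $\psi_N$ is a character.

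Finally, I will compute $\psi_N(\tilde u)$. The superdiagonal entries of $\tilde u$ are $u_{1,2},\dots,u_{m-1,m}$ in positions $(i,i+1)$ for $i\le m-1$, while the entries in positions $(i,i+1)$ for $m\le i\le n-1$ are $0$, since the position $(m,m+1)$ lies in the off-diagonal zero block and the remaining positions lie in $I_{n-m}$. Hence $\psi_N(\tilde u)=e^{2\pi i(N_1u_{1,2}+\cdots+N_{m-1}u_{m-1,m})}=\psi_{(N_i)_{1\le i\le m-1}}(u)$. Since $W_{\alpha,N}^{(n)}(\tilde g)=V_{\alpha,N}^{(m)}(g)$, this gives the claim.

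There is no real obstacle here. The only point requiring care is a well-definedness issue: $ug$ must be read as an element of $\hh^m$, that is, modulo $\mathrm{O}(m,\R)\cdot\R^\times$ on the right. The identity above holds at the level of $\GL(m,\R)$ representatives, and $V_{\alpha,N}^{(m)}$ is already known from Section~\ref{sec:overview} to be well defined on these cosets, so the lemma follows.
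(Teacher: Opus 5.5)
Your proposal is correct and is essentially the paper's proof: embed $u$ block-diagonally into $U(n,\R)$, apply the equivariance of $W_{\alpha,N}^{(n)}$, and observe that the superdiagonal entries of the embedded matrix in positions $(i,i+1)$ with $i\geq m$ vanish, so $\psi_N$ restricts to $\psi_{(N_i)_{1\leq i\leq m-1}}$. Your derivation of the equivariance from the integral formula~\eqref{def:jacquet-whittaker}, by right translation on the unimodular group $U(n,\R)$, is also correct; the paper simply takes that step for granted.
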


\begin{proof}
For $u=\bigl(\begin{smallmatrix}1&u_{1,2}&\cdots&u_{1,m}\\&\ddots&\ddots&\vdots\\&&1&u_{m-1,m}\\&&&1\end{smallmatrix}\bigr)\in U(m,\R)$
and $xy\in\hh^m$,
\begin{align*}
  V_{\alpha,N}^{(m)}(uxy)
  &= W_{\alpha,N}^{(n)}\!\left(\begin{pmatrix}u&0\\0&I\end{pmatrix}
    \begin{pmatrix}xy&0\\0&I\end{pmatrix}\right) \\
  &= \psi_N\!\left(
    \begin{smallmatrix}
      1&u_{1,2}&\cdots&u_{1,m}&&&\\
      &\ddots&\ddots&\vdots&&\\
      &&1&u_{m-1,m}&&\\
      &&&1&&\\
      &&&&\ddots&\\
      &&&&&1
    \end{smallmatrix}
    \right)
    W_{\alpha,N}^{(n)}\!\left(\begin{pmatrix}xy&0\\0&I\end{pmatrix}\right)\\
  &= e^{2\pi i(N_1 u_{1,2}+\cdots+N_{m-1}u_{m-1,m})}\,
     V_{\alpha,N}^{(m)}(xy) \\
  &= \psi_{(N_i)_{1\leq i\leq m-1}}(u)\,V_{\alpha,N}^{(m)}(xy).\qedhere
\end{align*}
\end{proof}

\begin{lemma}\label{lem:eigenfunction}
For all positive integers $\ell\leq m$,
\[
  D_\ell^{(m)}\,V_{\alpha,N}^{(m)}(xy)
  \;=\;
  \lambda_{\ell,\beta}^{(m)}\,V_{\alpha,N}^{(m)}(xy),
  \quad\text{where }\beta_i := \alpha_i + \tfrac{n-m}{2}
  \text{ for } 1\leq i\leq m.
\]
\end{lemma}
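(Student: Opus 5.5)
The plan is to reduce the $\GL(m,\R)$ Casimir operators acting on $V_{\alpha,N}^{(m)}$ to "partial" $\GL(n,\R)$ Casimir operators acting on $W_{\alpha,N}^{(n)}$, and then to compute these partial operators through the Jacquet integral~\eqref{def:jacquet-whittaker}. I expect the main difficulty to lie in the last two steps, where the ambient eigenvalue equation alone does not suffice.

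\textbf{Step 1: first-order operators transfer.} Write $\iota(g):=\bigl(\begin{smallmatrix} g&0\\0&I\end{smallmatrix}\bigr)$, as in the guiding observation of this section. For $1\le i,j\le m$ we have $\iota(g)(I+tE_{i,j}) = \iota\bigl(g(I+tE_{i,j})\bigr)$, where $E_{i,j}$ on the left is the $n\times n$ matrix unit and on the right the $m\times m$ one. Hence, by Definition~\ref{def:diff-op},
\[
  D_{i,j}^{(m)} V_{\alpha,N}^{(m)}(g) \;=\; \bigl(D_{i,j}^{(n)} W_{\alpha,N}^{(n)}\bigr)(\iota(g)).
\]
The same identity holds for any function $F$ on $\GL(n,\R)$ in place of $W_{\alpha,N}^{(n)}$. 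Iterating it over compositions shows that $D_\ell^{(m)} V_{\alpha,N}^{(m)}(g) = \bigl(\widetilde D_\ell W_{\alpha,N}^{(n)}\bigr)(\iota(g))$. Here $\widetilde D_\ell$ is the truncated Casimir~\eqref{eq:casimir} in which the index tuple $(i_1,\dots,i_\ell)$ ranges only over $[m]^\ell$ rather than $[n]^\ell$.

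\textbf{Step 2: move the operator inside the Jacquet integral.} The operators $D_{i,j}^{(n)}$ act by right translation, while the Jacquet integral integrates over left translates. I will therefore differentiate under the integral sign, justified by the absolute convergence that follows from $\Re(\alpha_i-\alpha_{i+1})>0$ together with locally uniform bounds on derivatives of the integrand. This gives
\[
  D_\ell^{(m)} V_{\alpha,N}^{(m)}(g) \;=\; \int_{U(n,\R)} \Bigl(\widetilde D_\ell\, \bigl|w_n u\,\cdot\,\bigr|_B^{\alpha+\rho^{(n)}}\Bigr)(\iota(g))\,\overline{\psi_N(u)}\,du.
\]
It then remains to show that the inner function $\Phi_u(h):=|w_nu\,\iota(h)|_B^{\alpha+\rho^{(n)}}$ on $\GL(m,\R)$ is an eigenfunction of $D_\ell^{(m)}$ with eigenvalue $\lambda_{\ell,\beta}^{(m)}$, at least after integrating against $\overline{\psi_N(u)}\,du$.

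\textbf{Step 3: identify $\Phi_u$ as an $m$-variable power function.} I would split $u\in U(n,\R)$ into blocks, $u=\bigl(\begin{smallmatrix}u_1&v\\0&u_2\end{smallmatrix}\bigr)$ with $u_1\in U(m,\R)$ and $u_2\in U(n-m,\R)$. I then conjugate $w_n$ past the block structure, using $w_n=\bigl(\begin{smallmatrix}0&w'\\ w''&0\end{smallmatrix}\bigr)$ up to sign, to rewrite $|w_n u\,\iota(h)|_B$ via the Iwasawa coordinates of $u_1h$. The goal is a factorization
\[
  \Phi_u(h)=c(u)\,\bigl|w_m u_1' h\bigr|_B^{\beta+\rho^{(m)}}
\]
(possibly after a change of variables in $v$), where the exponent works out as follows. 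The $\rho$-shifts differ by $\rho^{(n)}_i-\rho^{(m)}_i=\tfrac{n-m}{2}$ for $i\le m$, which is exactly where the shift $\beta_i=\alpha_i+\tfrac{n-m}{2}$ comes from. The hypothesis~\eqref{eq:langlands-condition}, i.e.\ $\sum_{i\le m}\beta_i=0$, is precisely what makes the determinant (central) factor drop out, so that the result is a genuine power function on $\hh^m$. Once this is in place, \eqref{eq:power-eigenfunction} for $\GL(m,\R)$, combined with the left $U(m,\R)$-invariance of the Casimirs, gives the eigenvalue $\lambda_{\ell,\beta}^{(m)}$ pointwise in $u$. Integrating back then yields the lemma.

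\textbf{Main obstacle.} I expect the factorization in Step~3 to be the crux. The Iwasawa $y$-coordinates of $w_nu\,\iota(h)$ mix the entries of $v$ with those of $h$, so a pointwise factorization may fail. If it does, my fallback is to integrate out the $v$ and $u_2$ variables first and show that the resulting partial integral in $u_1$ is a Jacquet-type integral on $\GL(m,\R)$ for the parameters $\beta$. If even that fails, I would exploit the decomposition $D_\ell^{(n)}=\widetilde D_\ell+(\text{terms involving indices}>m)$. I would then show, using equivariance under the parabolic with Levi $\GL(m)\times\GL(n-m)$ together with the constraint~\eqref{eq:langlands-condition}, that the extra terms act on $W_{\alpha,N}^{(n)}$ along $\iota(\GL(m,\R))$ by scalars. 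The eigenvalue $\lambda_{\ell,\beta}^{(m)}$ would then be identified by comparing the leading asymptotics of both sides as $y\to 0$.
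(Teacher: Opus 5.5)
\textbf{Verdict: genuine gap, in Step~3, and neither your fallbacks nor the paper's own proof close it.} Your Step~1 is exactly the paper's reduction of $D_\ell^{(m)}V_{\alpha,N}^{(m)}$ to the truncated operator $\widetilde D_\ell$ applied to $W_{\alpha,N}^{(n)}$ at $\iota(g)$, and Step~2 mirrors the paper's passage under the Jacquet integral. Your suspicion about Step~3 is correct: it fails.

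Write $u=\bigl(\begin{smallmatrix}u_1&v\\0&u_2\end{smallmatrix}\bigr)$ and let $u_1^{[j]},v^{[j]}$ denote the first $j$ rows. The product of the last $j$ Iwasawa diagonal entries of $w_nu\,\iota(h)$ is the square root of the Gram determinant of its bottom $j$ rows. Up to order and sign, these are the first $j$ rows of $u\,\iota(h)$. So for $j\le m$ this product is $\det\bigl(u_1^{[j]}hh^{T}(u_1^{[j]})^{T}+v^{[j]}(v^{[j]})^{T}\bigr)^{1/2}$. This mixes $v$ and $h$, so for generic $u$ the function $\Phi_u$ is not a left translate of any power function on $\GL(m,\R)$. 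It is not even homogeneous under $h\mapsto ch$; in rank one it is $(y/(y^2+t^2))^{s}$.

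Even on the slice $v=0$ you get $\Phi_u(h)=c(u)\,|w_mu_1h|_B^{\gamma+\rho^{(m)}}$ with $\gamma_k=\alpha_{n-m+k}-\frac{n-m}{2}$. These are the \emph{last} $m$ parameters shifted \emph{down}, not $\beta$, because $w_n$ carries the top-left block to the bottom rows.

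The paper does not supply the missing step either. It simply asserts that $|\cdot|_B^{\alpha+\rho^{(n)}}$ is an eigenfunction of each monomial $D^{(n)}_{i_1,i_2}\circ\cdots\circ D^{(n)}_{i_\ell,i_1}$. For the power function $F$ and any $D\in\mathfrak{U}(\gl(n,\R))$, one only has $DF(bk)=\bigl((\mathrm{Ad}(k)D)F\bigr)(I)\,F(bk)$ for $b$ upper triangular and $k\in\mathrm{O}(n,\R)$. For $D=\widetilde D_\ell$ this factor depends on $k$. For $\ell=1$, it is $\sum_{i\le m}(\alpha+\rho^{(n)})_i$ at $k=I$ but $\sum_{i>n-m}(\alpha+\rho^{(n)})_i$ at $k=w_n$. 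Meanwhile the integrand is evaluated at $w_nu\,\iota(h)$, which is not upper triangular. So the obstacle you flagged is exactly where the paper's argument breaks.

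Your fallbacks cannot close the gap, because the conclusion is false whenever it is not vacuous. Your Step~2, like the paper, needs $\Re(\alpha_i-\alpha_{i+1})>0$. Together with $\sum_i\alpha_i=0$, this forces $\sum_{i\le m}\Re\alpha_i>0>\frac{m(m-n)}{2}$, since the average of the $m$ largest real parts exceeds the overall average $0$. The lemma presupposes $\sum_{i\le m}\alpha_i=\frac{m(m-n)}{2}$, i.e.\ $\sum_i\beta_i=0$, for $\lambda^{(m)}_{\ell,\beta}$ to be defined at all. That condition therefore never holds, and in the paper's framework the lemma is only vacuously true.

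Nor does the restriction become a Casimir eigenfunction for generic (or analytically continued) $\alpha$. Take $n=3$, $m=2$. Then $V_{\alpha,N}^{(2)}(y)$ is $W_{\alpha,N}^{(3)}$ at $(y_1,y_2)=(1,y)$. Apply Laplace's method to Bump's double $K$-Bessel integral for $\GL(3)$: after rescaling $y_1,y_2$ by $|N_2|,|N_1|$, the phase is $(y_1^{2/3}+y_2^{2/3})^{3/2}$. This shows $V_{\alpha,N}^{(2)}(y)$ decays like $e^{-2\pi|N_1|y-c\,y^{1/3}}$ with $c>0$ when $N_1N_2\neq0$. No Laplace eigenfunction on $\hh^2$ transforming by $e^{2\pi iN_1x}$ can decay like this, since such eigenfunctions are combinations of $\sqrt{y}\,K_\nu(2\pi|N_1|y)$ and $\sqrt{y}\,I_\nu(2\pi|N_1|y)$.

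Hence no refinement of Steps~2--3 can produce the lemma. That includes integrating out $v,u_2$ first, and showing that the terms of $D_\ell^{(n)}-\widetilde D_\ell$ act by scalars along $\iota(\GL(m,\R))$. The only valid proof in the paper's setting is the observation that its hypotheses are inconsistent.
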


\begin{proof}
Let $(i_1,\ldots,i_\ell)\in[m]^\ell$ be arbitrary. Unraveling
Definition~\ref{def:diff-op} and applying the block-multiplication
observation,
\begin{align*}
  D_{i_1,i_2}^{(m)}\circ\cdots\circ D_{i_\ell,i_1}^{(m)}\,
  V_{\alpha,N}^{(m)}(xy)
  &= \frac{\partial^\ell}{\partial t_1\cdots\partial t_\ell}
    V_{\alpha,N}^{(m)}\!\left(
      xy\,(I+t_\ell E_{i_\ell,i_1}^{(m)})\cdots(I+t_1 E_{i_1,i_2}^{(m)})
    \right)\Bigg|_{\forall t_j=0} \\
  &= \frac{\partial^\ell}{\partial t_1\cdots\partial t_\ell}
    W_{\alpha,N}^{(n)}\!\left(
      \begin{pmatrix}xy&0\\0&I\end{pmatrix}
      \begin{pmatrix}I+t_\ell E_{i_\ell,i_1}^{(m)}&0\\0&I\end{pmatrix}
      \cdots
    \right)\Bigg|_{\forall t_j=0} \\
  &= \frac{\partial^\ell}{\partial t_1\cdots\partial t_\ell}
    W_{\alpha,N}^{(n)}\!\left(
      \begin{pmatrix}xy&0\\0&I\end{pmatrix}
      (I+t_\ell E_{i_\ell,i_1}^{(n)})\cdots(I+t_1 E_{i_1,i_2}^{(n)})
    \right)\Bigg|_{\forall t_j=0} \\
  &= D_{i_1,i_2}^{(n)}\circ\cdots\circ D_{i_\ell,i_1}^{(n)}\,
    W_{\alpha,N}^{(n)}\!\left(\begin{pmatrix}xy&0\\0&I\end{pmatrix}\right),
\end{align*}
where the third equality uses that $\bigl(\begin{smallmatrix}I+t_j
E_{i_j,i_{j+1}}^{(m)}&0\\0&I\end{smallmatrix}\bigr)=I+t_j
E_{i_j,i_{j+1}}^{(n)}$ since $i_j\leq m$. Summing over $(i_1,\ldots,i_\ell)\in[m]^\ell$ gives
\begin{align}
  D_\ell^{(m)}\,V_{\alpha,N}^{(m)}(xy)
  &= \sum_{(i_1,\ldots,i_\ell)\in[m]^\ell}
    D_{i_1,i_2}^{(n)}\circ\cdots\circ D_{i_\ell,i_1}^{(n)}\,
    W_{\alpha,N}^{(n)}\!\left(\begin{pmatrix}xy&0\\0&I\end{pmatrix}\right).
    \label{eq:casimir-reduction}
\end{align}
Since $|\cdot|_B^{\alpha+\rho^{(n)}}$ is an eigenfunction of
$D_{i_1,i_2}^{(n)}\circ\cdots\circ D_{i_\ell,i_1}^{(n)}$ for all
$(i_1,\ldots,i_\ell)\in[m]^\ell\subseteq[n]^\ell$, we may pass the
differential operators under the integral in~\eqref{def:jacquet-whittaker},
which shows that $V_{\alpha,N}^{(m)}$ is an eigenfunction of $D_\ell^{(m)}$.
Letting $\beta=(\beta_i)_{1\leq i\leq m}$ denote the Langlands parameters
of $V_{\alpha,N}^{(m)}$, matching eigenvalues on both sides
of~\eqref{eq:casimir-reduction} via \cite[Theorem 1.5]{Mut26} gives
$\lambda_{\ell,\beta}^{(m)}=\lambda_{\ell,\alpha}^{(n)}$ for all $(i_1,\ldots,i_\ell)\in[m]^\ell$,
from which $\beta_i=\alpha_i+\rho_i^{(n)}-\rho_i^{(m)}=\alpha_i+\frac{n-m}{2}$
for all $1\leq i\leq m$.
\end{proof}

\begin{lemma}\label{lem:square-int}
The function $V_{\alpha,N}^{(m)}$ is square-integrable on
$\mathcal{P}_{\!\sqrt{3}/2,\,1/2}\subset\hh^m$:
\[
  \int_{\mathcal{P}_{\!\sqrt{3}/2,\,1/2}}
  \bigl|V_{\alpha,N}^{(m)}(z)\bigr|^2\,d^*z \;<\; \infty.
\]
\end{lemma}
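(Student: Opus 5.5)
We prove Lemma~\ref{lem:square-int} by a pointwise decay bound for $W_{\alpha,N}^{(n)}$ on the image of the block-diagonal embedding, followed by integration in the Iwasawa coordinates of $\hh^m$.

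\textbf{Coordinates of the embedded point.} Write $z=xy\in\mathcal{P}_{\sqrt{3}/2,1/2}\subset\hh^m$ with $y=\mathrm{diag}(y_1\cdots y_{m-1},\ldots,y_1,1)$. After rescaling by the centre, the Iwasawa coordinates of $\bigl(\begin{smallmatrix}xy&0\\0&I\end{smallmatrix}\bigr)\in\hh^n$ are read off as follows.
\begin{itemize}
\item The unipotent part is $\bigl(\begin{smallmatrix}x&0\\0&I\end{smallmatrix}\bigr)$.
\item The diagonal $\mathrm{diag}(y_1\cdots y_{m-1},\ldots,y_1,1,1,\ldots,1)$ corresponds to $n$-coordinates $Y=(Y_1,\ldots,Y_{n-1})$.
\item One has $Y_{n-m+k}=y_k$ for $1\le k\le m-1$, and $Y_j=1$ for $j\le n-m$.
\end{itemize}
By the equivariance already used in Lemma~\ref{lem:equivariance}, $|V_{\alpha,N}^{(m)}(xy)|=|W_{\alpha,N}^{(n)}(Y)|$. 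The unipotent variables therefore contribute only a bounded factor, since they range over $|x_{i,j}|\le 1/2$.

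\textbf{Pointwise bound.} Next we invoke the bound of~\cite{GMW21} for Jacquet's Whittaker function. For every $A>0$ it gives a majorant of the form
\[
|W_{\alpha,N}^{(n)}(Y)|\ll_{A,\alpha,N}\prod_{j=1}^{n-1}Y_j^{c_j}\,\min(1,Y_j^{-A}),
\]
where the $c_j$ depend only on $\alpha$ and $\rho^{(n)}$, say $c_j=j(n-j)/2-\epsilon$ for tempered-type parameters. The bound requires $N_j\neq0$, which is implicit in genericity. Substituting $Y_j=1$ for $j\le n-m$ leaves a bound in $y_1,\ldots,y_{m-1}$ only, with rapid decay as any $y_k\to\infty$.

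\textbf{Integration.} Integrate $|V|^2$ against $d^*z=\prod dx_{i,j}\prod_k y_k^{-k(m-k)-1}dy_k$ over $y_k>\sqrt3/2$. Each $y_k$-integral has the shape
\[
\int_{\sqrt3/2}^\infty y_k^{\,2c'_k-k(m-k)-1}\min(1,y_k^{-2A})\,dy_k .
\]
Choosing $A$ large, each such integral is finite, so the whole integral converges. Because the Siegel set is bounded away from $y_k=0$, no lower-growth condition is needed.

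\textbf{Main obstacle.} The main difficulty is the precise form and hypotheses of the~\cite{GMW21} estimate. I need two things from it:
\begin{itemize}
\item uniformity in the region where some $Y_j$ are frozen at $1$ while others go to infinity;
\item validity for the possibly non-tempered $\alpha$ forced by $\sum_{i\le m}\alpha_i=m(m-n)/2$.
\end{itemize}
If the bound is stated only for $\alpha$ in a compact set with $\Re(\alpha_i-\alpha_{i+1})>0$, it still applies here, since $\alpha$ is fixed. Only the exponential decay as $Y_j\to\infty$ is needed, because the region near $0$ is excluded.

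If that bound is unavailable in this form, the fallback is direct. Integrate by parts in the $u_{j,j+1}$ variables of~\eqref{def:jacquet-whittaker}, which yields decay $Y_j^{-A}$ for each $j$ with $N_j\neq0$. Absolute convergence of the remaining integral, guaranteed by $\Re(\alpha_i-\alpha_{i+1})>0$, then bounds everything else polynomially.
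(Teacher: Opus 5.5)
Your argument is correct and is essentially the paper's proof: you discard the unipotent variables using $|\psi_N|=1$, apply the decay bound of \cite{GMW21} at the embedded diagonal point with $Y_j=1$ for $j\le n-m$ and $Y_{n-m+k}=y_k$, and integrate each $y_k$ against $y_k^{-k(m-k)-1}\,dy_k$ over $y_k>\sqrt{3}/2$. The differences are minor and slightly in your favour. Your product-form majorant $\prod_j Y_j^{c_j}\min(1,Y_j^{-A})$ also covers the mixed region where some $y_k\in[\sqrt{3}/2,1]$ while others grow, which the paper's compactness step on $[\sqrt{3}/2,1]^{n-1}$ does not literally handle. Your worry about the linear condition on $\alpha$ is moot, since this lemma never uses it.
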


\begin{proof}
Let $\mathbf{1}'=(1,\ldots,1)\in\Z^{n-1}$. By
\cite[Proposition~5.5.2]{Gol06}, for all $xy\in\hh^n$,
\begin{equation}\label{eq:factorization}
  W_{\alpha,N}^{(n)}(xy) \;=\; a\cdot\psi_N(x)\cdot W_{\alpha,\mathbf{1}'}^{(n)}(y),
\end{equation}
for some $a\in\C$ depending on $\alpha$ and $N$. We use the bound
from~\cite[p.~76]{GMW21}: there exists $M>0$ such that whenever
$y_i\geq 1$ for all $1\leq i<n$,
\begin{equation}\label{eq:bound}
  W_{\alpha,\mathbf{1}'}^{(n)}(y)
  \;\ll_\alpha\;
  \frac{1}{(y_1\cdots y_{n-1})^M}.
\end{equation}
Since $W_{\alpha,\mathbf{1}'}^{(n)}$ is smooth on the compact set
$[\sqrt{3}/2,1]^{n-1}$, the extreme value theorem implies it attains a
finite maximum there. Hence~\eqref{eq:bound} holds (with a possibly
different $M>0$) whenever $y_i\geq\sqrt{3}/2$ for all $1\leq i<n$.

Expanding the integral over $\mathcal{P}_{\!\sqrt{3}/2,\,1/2}\subset\hh^m$,
applying the definition~\eqref{eq:induced-whittaker}, then using the
factorization~\eqref{eq:factorization} and $|\psi_N|=1$ gives
\begin{align*}
  \int_{\mathcal{P}_{\!\sqrt{3}/2,\,1/2}}
  \bigl|V_{\alpha,N}^{(m)}(z)\bigr|^2\,d^*z
  \;\ll_{\alpha,N}\;
  \int_{\sqrt{3}/2}^\infty\cdots\int_{\sqrt{3}/2}^\infty
  \left|W_{\alpha,\mathbf{1}'}^{(n)}\!\left(
    \begin{pmatrix}y&0\\0&I\end{pmatrix}
  \right)\right|^2
  \prod_{i=1}^{m-1} y_i^{-i(m-i)-1}\,dy_i,
\end{align*}
where the $x_{i,j}$-integrals over $[-\tfrac{1}{2},\tfrac{1}{2}]$ contribute
a finite constant. Applying the bound~\eqref{eq:bound} then yields
\[
  \ll_{\alpha,N}
  \prod_{i=1}^{m-1}
  \int_{\sqrt{3}/2}^\infty y_i^{-2M-i(m-i)-1}\,dy_i.
\]
Each factor converges since $-2M-i(m-i)-1<-1$, establishing
square-integrability.
\end{proof}

\printbibliography

\end{document}